\documentclass[12pt, a4paper]{amsart}
\usepackage{hyperref, fullpage, amsmath, amsfonts, amsthm, amssymb, stmaryrd}

\theoremstyle{plain}
\newtheorem{Theorem}{Theorem}[section]

\newtheorem{Corollary}[Theorem]{Corollary}
\newtheorem{Proposition}[Theorem]{Proposition}

\theoremstyle{remark}
\newtheorem{Remark}[Theorem]{Remark}

\newcommand{\hhb}[1]{{\hbox to#1pt{}}}

\newcommand{\mfp}{\mathfrak{p}}

\address{Institut f\"ur Algebra, Technische Universit\"at Dresden, 01062 Dresden}
\email{arno.fehm@tu-dresden.de}

\bibliographystyle{plain}

\begin{document}

\title{Three counterexamples concerning the \\Northcott property of fields}
\author{Arno Fehm}
\maketitle

\begin{abstract}
We give three examples of fields concerning the Northcott property on elements of small height:
The first one has the Northcott property but its Galois closure does not even satisfy the Bogomolov property.
The second one has the Northcott property and is pseudo-algebraically closed, i.e.~every variety has a dense set of rational points.
The third one has bounded local degree at infinitely many rational primes but does not have the Northcott property.
\end{abstract}

\section{Introduction}

\noindent
Northcott's theorem on the finiteness of elements of bounded height in number fields is of central importance in diophantine geometry,
for example very classically in the proof of the Mordell-Weil theorem.
Motivated by that, Bombieri and Zannier \cite{BZ}
say that a field $K\subseteq\overline{\mathbb{Q}}$ has the {\bf Northcott property (N)} if for each $T>0$
the set 
$$
 K_T:=\{\alpha\in K^\times:h(\alpha)<T\}
$$ 
is finite, where $h:\overline{\mathbb{Q}}\rightarrow\mathbb{R}$ denotes the absolute logarithmic Weil height.
In the same paper, the authors introduce another closely related notion:
A field $K$ has the {\bf Bogomolov property (B)} if there exists $T>0$ such that
$K_T$ consists only of the roots of unity in $K$.
Note that clearly (N) implies (B).
These and related properties have since been studied by various authors, see e.g.~\cite{AZ, DZ, Widmer, CW, Habegger, Pottmeyer_p, GR}.

One theme in this area is whether properties like (N) and (B) are preserved under taking Galois closures.
For example, \cite[Cor.~2]{Widmer} gives a field $K\subseteq\overline{\mathbb{Q}}$ with (N) whose Galois closure over $\mathbb{Q}$ does not have (N).
Similarly, \cite[Example 1]{Pottmeyer} gives a field $K\subseteq\overline{\mathbb{Q}}$ with (B) whose Galois closure over $\mathbb{Q}$ does not have (B),
and states that ``It would be interesting to know whether the Galois closure of
a field with the Northcott property necessarily satisfies the Bogomolov property.''
Our first result is that the answer to this is negative:

\begin{Proposition}\label{prop1}
There exists an algebraic extension $K/\mathbb{Q}$ 
such that $K$ has the Northcott property
but the Galois closure of $K/\mathbb{Q}$ does not have the Bogomolov property.
\end{Proposition}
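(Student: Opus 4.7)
\medskip\noindent\textbf{Plan.}
I would prove Proposition \ref{prop1} by constructing $K$ as the compositum of pairwise linearly disjoint non-normal number fields $L_n\subset\overline{\mathbb{Q}}$, each chosen so that its Galois closure over $\mathbb{Q}$ contributes a small-height non-root-of-unity element that is \emph{not} in $L_n$ itself. The key conceptual point is that since the absolute height is Galois-invariant, small-height elements cannot be Galois conjugates of a high-height generator; they must arise from multiplicative combinations of conjugates. The construction below exploits exactly such a relation, via a non-real radical extension.

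Let $p_1<p_2<\cdots$ enumerate the odd primes and let $c_n\in\mathbb{Z}_{\ge 1}$ with $\gcd(c_n,p_n)=1$ be parameters to be chosen. For each $n$, let $\beta_n$ be a non-real root of $x^{p_n}-2^{c_n}$; this polynomial is irreducible, so $L_n:=\mathbb{Q}(\beta_n)$ has degree $p_n$. Since $p_n-1\nmid p_n$, one has $\zeta_{p_n}\notin L_n$, whence $L_n$ is non-normal with Galois closure $M_n=\mathbb{Q}(2^{1/p_n},\zeta_{p_n})$. Moreover $L_n\not\subset\mathbb{R}$ while $\mathbb{Q}(2^{1/p_n})\subset\mathbb{R}$, so by comparing degrees $2^{1/p_n}\notin L_n$. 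Setting $K:=\bigcup_n L_n$, the Galois closure $K^{\mathrm{gal}}\supseteq\bigcup_n M_n$ contains every $2^{1/p_n}$, which has height $\log(2)/p_n\to 0$ and is not a root of unity. Hence $K^{\mathrm{gal}}$ fails (B).

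For the Northcott property of $K$ I would apply Widmer's discriminant criterion \cite{Widmer} to the tower $K_0:=\mathbb{Q}\subset K_n:=L_1\cdots L_n$. Since the $p_n$ are pairwise coprime, the $L_n$ are pairwise linearly disjoint over $\mathbb{Q}$, so $[K_n:K_{n-1}]=p_n$; and the relative discriminant $\mathfrak{d}(K_n/K_{n-1})$ divides the extension of $\mathfrak{d}(L_n/\mathbb{Q})$ to $K_{n-1}$, which in turn divides $p_n^{p_n}\cdot 2^{c_n(p_n-1)}$. Choosing $c_n$ to grow sufficiently quickly (e.g.\ $c_n=p_n^n$) makes the growth of the relative discriminants outpace the degrees, and Widmer's criterion then yields (N) for $K$. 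The main obstacle is the precise verification of the resulting discriminant-versus-degree inequality in the compositum tower; once the appropriate growth rate of $c_n$ is pinned down, the rest of the argument is routine.
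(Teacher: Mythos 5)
Your construction has a fatal flaw: the field $K$ you build does not satisfy the Northcott property, and in fact does not even satisfy the Bogomolov property.

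To see this, note first that the parameter $c_n$ is vacuous. If $\beta_n$ is any root of $x^{p_n}-2^{c_n}$ with $\gcd(c_n,p_n)=1$, then writing $\alpha=2^{1/p_n}$ (the positive real root) we have $\beta_n=\zeta_{p_n}^{j}\alpha^{c_n}$ for some $j$, and choosing $c_n'$ with $c_nc_n'\equiv 1\pmod{p_n}$ one gets $\beta_n^{c_n'}=\zeta_{p_n}^{jc_n'}\alpha\cdot 2^{k}$ for some integer $k$, whence $\mathbb{Q}(\beta_n)=\mathbb{Q}(\zeta_{p_n}^{jc_n'}\alpha)$. Thus $L_n$ is always one of the $p_n$ degree-$p_n$ subfields $\mathbb{Q}(\zeta_{p_n}^{m}\,2^{1/p_n})$ of $M_n$, with $m\neq 0$ by your non-reality requirement, and in particular $L_n$ (and hence its discriminant) is independent of $c_n$. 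But much worse, the element $\zeta_{p_n}^{m}\,2^{1/p_n}\in L_n$ has absolute value $2^{1/p_n}\neq 1$, so it is not a root of unity, and it has height $h(\zeta_{p_n}^{m}2^{1/p_n})=h(2^{1/p_n})=\log(2)/p_n\to 0$. So $K=\bigcup_n L_n$ itself contains non-torsion elements of arbitrarily small height: $K$ fails (B), let alone (N). Relatedly, your discriminant estimate runs in the wrong direction: the divisibility $\mathfrak{d}(K_n/K_{n-1})\mid\mathfrak{d}(L_n/\mathbb{Q})\mathcal{O}_{K_{n-1}}$ gives an \emph{upper} bound, whereas Widmer's criterion needs the norm of the relative discriminant to be \emph{large} relative to the degree; and since the only primes ramifying in $L_n$ are $2$ and $p_n$ (with $p_n$ merely the $n$-th odd prime), the criterion cannot be met.

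The idea that is missing is that the radicand must contain a prime chosen \emph{after} the degree and made astronomically large (so that the generator $\gamma_i$ of $K$ has large height and Widmer's criterion applies), while only a \emph{fixed} factor $\alpha$ survives in the quotient $\sigma\gamma_i/\gamma_i$ appearing in the Galois closure. Concretely, the paper takes $\gamma_i^{l_i}=p_i\alpha$ with $p_i$ a huge prime and $\alpha$ a fixed algebraic integer having a conjugate $\sigma\alpha$ with $\sigma\alpha/\alpha$ non-torsion; then $h(\gamma_i)=(\log p_i+h(\alpha))/l_i$ is large, keeping $K$ Northcott via Widmer, yet $(\sigma\gamma_i/\gamma_i)^{l_i}=\sigma\alpha/\alpha$ is independent of $p_i$, so $\sigma\gamma_i/\gamma_i$ has height $h(\sigma\alpha/\alpha)/l_i\to 0$ in the Galois closure. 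Using a fixed radicand like $2$ (or any fixed $2^{c_n}$) collapses this separation and puts the small-height elements inside $K$ itself.
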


The intuition being that varieties over fields with (B), or even more so, with (N), have `few' point,
Amoroso, David and Zannier \cite{ADZ} 
asked whether there exists a field $K$ with (B) that
is {\bf pseudo-algebraically closed}, i.e.~every geometrically irreducible variety $V$ over $K$ has a $K$-rational point\footnote{This property first occurred in the work of Ax on the elementary theory of finite fields. The term {\em pseudo-algebraically closed} was coined by Frey.},
and they present ``some evidences for a negative answer''.
However, Pottmeyer \cite{Pottmeyer} showed that such fields do exist,
and while this was seen as surprising,
it was apparently expected that at least there should be no pseudo-algebraically closed fields with (N):
Our second result is that such fields do in fact exist,
and can even be chosen Galois over $\mathbb{Q}$ (which might be interesting in light of Proposition~\ref{prop1}):

\begin{Proposition}\label{prop2}
There exists a Galois extension $K/\mathbb{Q}$
such that $K$ is pseudo-algebraically closed and has the Northcott property.
\end{Proposition}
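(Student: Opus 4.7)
The plan is to realize $K$ as the union of an increasing tower of Galois number fields $\mathbb{Q}=K_0\subseteq K_1\subseteq K_2\subseteq\cdots$, built inductively so as to force a $K$-rational point on every geometrically irreducible $K$-variety (for PAC) while making the heights of newly adjoined elements grow without bound at each step (for (N)).

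Concretely, I would fix an enumeration $(L_1,V_1),(L_2,V_2),\ldots$ of all pairs consisting of a finite Galois extension $L_i\subseteq\overline{\mathbb{Q}}$ of $\mathbb{Q}$ and a geometrically irreducible $L_i$-variety $V_i$. At step $n$ I select the least unhandled index $i\le n$ with $L_i\subseteq K_n$, pick a closed point $P$ of the (still geometrically irreducible) $K_n$-variety $V_i\times_{L_i}K_n$, and let $K_{n+1}$ be the Galois closure over $\mathbb{Q}$ of the residue field $K_n(P)$. With this bookkeeping, $K=\bigcup_n K_n$ is a Galois extension of $\mathbb{Q}$ and is automatically PAC, because any geometrically irreducible $K$-variety descends to some $K_n$ and is handled at a later step.

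The technical heart is to control heights so that $\lambda_n:=\inf\{h(\alpha):\alpha\in K_n\setminus K_{n-1},\ \alpha\notin\mu_\infty\}\to\infty$ and that only finitely many roots of unity ever enter $K$; granted this, finiteness of $K_T$ for each $T>0$ reduces to finiteness inside a single number field $K_{n_0(T)}$ via classical Northcott. To arrange the growth of $\lambda_n$, I use the freedom in the choice of $P$: geometric irreducibility over the infinite field $K_n$ supplies a Zariski-dense set of closed points (say, via a Noether normalization of $V_i\times_{L_i}K_n$ and varying in the generic fibre), so one can arrange the minimal polynomial of a generator of $K_n(P)/K_n$ to be ramified at a prescribed, arbitrarily large fresh rational prime and to introduce no new roots of unity. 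A discriminant/height lower bound in the spirit of \cite{Widmer} then converts this ramification data into the required growth of $\lambda_n$.

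The main obstacle is precisely the coupling between the PAC and Northcott requirements: at step $n$ the extension $K_{n+1}/K_n$ is forced to come from the prescribed variety $V_i\times_{L_i}K_n$, so one cannot freely engineer its ramification. The resolution is a Hilbert-irreducibility-type thin-set argument: the ``bad'' closed points, namely those whose residue field either fails the discriminant lower bound or drags in new roots of unity, form a thin subset of $V_i\times_{L_i}K_n$, leaving a dense supply of good choices for $P$. Once this thin-set statement is established, verifying that $K=\bigcup_n K_n$ is Galois over $\mathbb{Q}$, PAC, and satisfies (N) is routine.
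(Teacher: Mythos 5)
Your overall strategy---build $K$ as a union of a tower, force PAC via an enumeration, and force (N) by arranging large fresh ramification at each step so that Widmer's criterion applies---matches the paper's, but the proposal has a genuine gap at the one place where the real work happens.

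The crux is the claim that, for a geometrically irreducible variety $V_i\times_{L_i}K_n$, the closed points whose residue field ``fails the discriminant lower bound'' form a thin set, so a good $P$ exists. This is precisely the statement one has to prove, and it is not at all a standard consequence of Hilbert irreducibility in the generality you set up: you are working with an arbitrary variety over a (by then large) number field $K_n$, then passing to the Galois closure over $\mathbb{Q}$ of $K_n(P)$, and you would need to control the relative discriminant of \emph{every} intermediate field between $K_n$ and that Galois closure, not just of $K_n(P)/K_n$. Widmer's criterion (Theorem~\ref{thm:Widmer} and Corollary~\ref{cor}) requires a lower bound on $N_{K_{n}/\mathbb{Q}}(D_{M/K_{n}})$ uniformly over all $K_n\subsetneqq M\subseteq K_{n+1}$, and ramifying one large fresh prime in $K_n(P)$ does not by itself force every such $M$ to ramify above a large prime that is new for $K_n$: the Galois closure can contain many subextensions whose ramification you have not touched. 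Your substitute criterion ``$\lambda_n\to\infty$ plus finitely many roots of unity'' is not how Widmer's result works and is not obviously achievable from a thin-set avoidance on closed points.

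The paper closes exactly this gap by two reductions you do not make: first, using Fried--Jarden to reduce PAC to the statement that every geometrically irreducible curve over $\mathbb{Q}$ that is a \emph{Galois} cover of $\mathbb{P}^1$ has a $K$-point; second, invoking Legrand's refinement of Hilbert's irreducibility theorem (his Corollary~3.3), which for a Galois cover $\varphi_i:X_i\to\mathbb{P}^1$ over $\mathbb{Q}$ produces a rational specialization whose residue field $N_i$ is Galois over $\mathbb{Q}$ with ${\rm Gal}(N_i/\mathbb{Q})\cong{\rm Gal}(E/F)$ \emph{and} with prescribed inertia (hence prescribed ramification) at finitely many chosen large primes, one for each nontrivial subextension. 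This is what guarantees that every $\mathbb{Q}\subsetneqq M_0\subseteq N_i$ ramifies at a prime $p>i^{d_i^2}$ unramified in $K_{i-1}$, and hence every $K_{i-1}\subsetneqq M\subseteq K_{i-1}N_i$ does too. Without the reduction to Galois covers of $\mathbb{P}^1$ over $\mathbb{Q}$ and a Legrand-type specialization theorem, the ``dense supply of good $P$'' is an assertion, not an argument, and it is the hardest step in the entire proof.
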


As the Northcott property implies a variety of other well-studied properties of fields
(see e.g.~\cite[Theorem 6.8]{CW}),
for example on pre-periodic points of polynomial mappings,
this proposition might also give surprising counterexamples to some of the questions there,
but we will not discuss these implications here.

The construction of the example in Proposition \ref{prop1} is completely elementary,
while the construction of the example in Proposition \ref{prop2} 
uses some (known) results on specializations of covers of curves.
The Northcott property in both cases follows from a very general criterion of Widmer \cite{Widmer},
which we recall is Section \ref{sec:Widmer}.

Pottmeyer \cite[Question 4.7]{Pottmeyer_p} asks whether the Northcott property is implied
by other properties like the Narkiewicz property (R) (cf.~\cite[Definition 6.6]{CW}).
The following example answers this questions negatively:

\begin{Proposition}\label{prop3}
There exists a Galois extension $K/\mathbb{Q}$ such that infinitely many prime numbers 
are totally split in $K$ but $K$ does not have the Northcott property.
\end{Proposition}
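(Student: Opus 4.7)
The plan is to construct $K$ as an ascending union $K=\bigcup_n K_n$ of finite Galois extensions of $\mathbb{Q}$, built inductively together with an increasing sequence of primes $\ell_1<\ell_2<\cdots$ and algebraic numbers $\alpha_n\in K_n\setminus K_{n-1}$, so that each $\ell_i$ is totally split in $K_n$ for every $n\geq i$ and $h(\alpha_n)\leq T$ for all $n$, where $T>0$ is a fixed threshold. Once this is achieved, $K/\mathbb{Q}$ is Galois (an ascending union of Galois extensions), infinitely many primes (namely the $\ell_i$) are totally split in $K$, and $\{\alpha_n:n\geq 1\}$ is an infinite set of pairwise distinct elements of $K$ of height at most $T$, so $K$ fails the Northcott property.

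The inductive step from $K_{n-1}$ to $K_n$ has two parts. Assuming $\ell_1,\dots,\ell_{n-1}$ are already totally split in $K_{n-1}$, one first produces some $\alpha_n\in\overline{\mathbb{Q}}\setminus K_{n-1}$ with $h(\alpha_n)\leq T$ whose Galois closure $L_n/\mathbb{Q}$ has $\ell_1,\dots,\ell_{n-1}$ totally split; then $K_n:=K_{n-1}\cdot L_n$ is a finite Galois extension of $\mathbb{Q}$ in which $\ell_1,\dots,\ell_{n-1}$ remain totally split. One then applies Chebotarev's density theorem to the finite Galois extension $K_n/\mathbb{Q}$ to obtain infinitely many primes totally split in $K_n$, and picks $\ell_n>\ell_{n-1}$ among them.

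The technical heart of the argument is producing the small-height element $\alpha_n$ with the prescribed local splitting behaviour. A concrete approach is to construct a polynomial $f_n(x)\in\mathbb{Z}[x]$ of large degree $d_n$ whose reduction modulo each $\ell_i$ (with $i<n$) is a product of distinct linear factors, so that $\ell_i$ splits completely in the splitting field of $f_n$; the coefficients of $f_n$ are prescribed modulo $\prod_{i<n}\ell_i$ by the Chinese Remainder Theorem, and one picks minimal representatives. Using the standard estimate $M(f_n)\leq\sqrt{d_n+1}\,\|f_n\|_\infty$, the height of a root $\alpha_n$ is bounded by
\[ h(\alpha_n)\leq\frac{\log M(f_n)}{d_n}\leq\frac{\tfrac{1}{2}\log(d_n+1)+\log\|f_n\|_\infty}{d_n}, \]
which is forced below $T$ by taking $d_n$ large compared to $\log\prod_{i<n}\ell_i$. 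The main difficulty is that $d_n$ is constrained by $d_n\leq\min_{i<n}\ell_i$ (one needs $d_n$ distinct residues in each $\mathbb{F}_{\ell_i}$), so the primes $\ell_n$ must be chosen sufficiently large at each stage to allow enough room for the degrees to grow; an attractive alternative is to replace the single polynomial construction by Kummer extensions inside cyclotomic towers, adjoining $m$-th roots of integers that are $m$-th powers locally at each $\ell_i$, with $m$ taken large enough that the resulting heights $(\log c)/m$ stay uniformly small while the $\ell_i$ remain totally split in every new layer.
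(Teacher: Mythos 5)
Your high-level inductive structure is essentially the same as the paper's: alternate between (a) producing a new small-height element in which all previously chosen primes remain totally split, and (b) using Chebotarev to pick the next prime totally split in everything built so far. The paper packages this as an intersection $K=\bigcap_i\mathbb{Q}^{{\rm t}p_i}$, you package it as an ascending union of compositums, but these are the same idea. You have also correctly identified where the real difficulty lies, namely step (a).

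However, that is exactly where your sketch has a genuine gap. Your single-polynomial construction requires $f_n\bmod\ell_i$ to split into $d_n$ \emph{distinct} linear factors for every $i<n$, hence $d_n\le\min_{i<n}\ell_i=\ell_1$. So the degree $d_n$ is bounded by the first prime forever, while the CRT modulus $\prod_{i<n}\ell_i$ grows without bound; the resulting height bound $\tfrac{1}{d_n}\log\|f_n\|_\infty\gtrsim\tfrac{1}{\ell_1}\sum_{i<n}\log\ell_i$ diverges rather than staying below $T$. Choosing later primes $\ell_n$ large does not help, because the binding constraint is $\ell_1$. The Kummer alternative you mention has an analogous obstruction: for $\ell_i$ to split completely in $\mathbb{Q}(\zeta_m,c^{1/m})$ one needs $m\mid \ell_i-1$ for every $i<n$, which again pins $m$ down rather than letting it grow. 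What is actually needed here is a delicate construction, and this is precisely the content of Bombieri--Zannier's Example 2 in \cite{BZ}, which the paper invokes: for any finite set of primes $p_1,\dots,p_n$ they prove $\liminf_{\alpha\in\bigcap_i\mathbb{Q}^{{\rm t}p_i}}h(\alpha)\le\sum_i\frac{\log p_i}{p_i-1}$. With that in hand, one simply chooses the $p_i$ large enough that $\sum_i\frac{\log p_i}{p_i-1}<T$, and the rest of your argument (Chebotarev to keep old elements split at new primes, Galois union) goes through as you describe. Without a substitute for that estimate, the proof is incomplete.
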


Namely, Pottmeyer \cite[Theorem 4.3]{Pottmeyer_p} shows that every Galois extension of $\mathbb{Q}$ that has finite local degree
at infinitely many prime numbers (in particular, any $K$ as in Proposition \ref{prop3})
satisfies the so-called {\em universal strong Bogomolov property (USB)}, 
which in turn implies (R) and other related properties \cite[Lemma 4.2]{Pottmeyer_p}.
The construction of the example in Proposition \ref{prop3} builds on a result of Bombieri and Zannier \cite{BZ}.

\section{Widmer's criterion}\label{sec:Widmer}

\noindent
We start by quoting the criterion of Widmer \cite[Theorem 3]{Widmer}
and state a special case that is sufficient for our constructions:

\begin{Theorem}\label{thm:Widmer}
Let $K_0\subseteq K_1\subseteq\dots$ be a tower of number fields with
$$
 \inf_{K_{i-1}\subsetneqq M\subseteq K_i}N_{K_{i-1}/\mathbb{Q}}(D_{M/K_{i-1}})^{([M:K_0][M:K_{i-1}])^{-1}}\rightarrow\infty\mbox{ as }i\rightarrow\infty,
$$
where the infimum is taken over intermediate fields $M$, and $D_{M/K_i}$ denotes the relative discriminant.
Then $K:=\bigcup_{i=0}^\infty K_i$ has (N).
\end{Theorem}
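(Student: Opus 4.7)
The plan is to argue by contradiction. Suppose $K_T$ is infinite for some $T>0$; since each number field $K_j$ has (N), only finitely many elements of $K_T$ lie in any fixed $K_j$, so we can pick a sequence $(\alpha_n)$ in $K_T$ whose minimal indices $i_n$ (characterized by $\alpha_n\in K_{i_n}\setminus K_{i_n-1}$) tend to infinity. Setting $M_n:=K_{i_n-1}(\alpha_n)$ gives a proper extension of $K_{i_n-1}$ inside $K_{i_n}$, which therefore appears in the infimum at step $i_n$. The goal is to show that for these specific $M_n$ the quantity $N_{K_{i_n-1}/\mathbb{Q}}(D_{M_n/K_{i_n-1}})^{1/([M_n:K_0][M_n:K_{i_n-1}])}$ stays bounded, contradicting the hypothesis.

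The heart of the proof is a Silverman--Mahler-type inequality of the form
\[
  \log N_{K/\mathbb{Q}}(D_{L/K}) \;\le\; [K:\mathbb{Q}]\,[L:K]\bigl([L:K]-1\bigr)\,h(\alpha) + O(1),
\]
valid for every finite extension $L=K(\alpha)$ of number fields. I would prove it by writing the discriminant of the minimal polynomial of $\alpha$ over $K$ as $\prod_{i<j}(\alpha^{(i)}-\alpha^{(j)})^2$, bounding each conjugate difference placewise in terms of $h(\alpha)$, and then taking the absolute norm from $K$ to $\mathbb{Q}$; since $D_{L/K}$ divides the ideal generated by $\mathrm{disc}(f_\alpha)$ in $\mathcal{O}_K$ when $\alpha$ is an algebraic integer, the left-hand side is controlled by this procedure. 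A short reduction replacing $\alpha_n$ by a suitable nonzero integer multiple ensures integrality of the generator at the cost of an additive constant in $h(\alpha_n)$.

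Plugging this estimate into the setup above and using the identity
\[
  [M_n:K_0]\,[M_n:K_{i_n-1}] \;=\; \frac{[K_{i_n-1}:\mathbb{Q}]\,[M_n:K_{i_n-1}]^2}{[K_0:\mathbb{Q}]},
\]
the factors $[K_{i_n-1}:\mathbb{Q}]$ and $[M_n:K_{i_n-1}]^2$ cancel between numerator and denominator to give
\[
  N_{K_{i_n-1}/\mathbb{Q}}(D_{M_n/K_{i_n-1}})^{1/([M_n:K_0][M_n:K_{i_n-1}])} \;\le\; \exp\!\bigl([K_0:\mathbb{Q}](T+O(1))\bigr),
\]
a bound independent of $n$. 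Thus the infimum at step $i_n$ remains bounded as $n\to\infty$, contradicting the assumption that it tends to infinity.

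The main obstacle I expect is the discriminant--height inequality together with the exponent bookkeeping: the factors $[K_{i_n-1}:\mathbb{Q}]$ and $[M_n:K_{i_n-1}]^2$ that arise on the discriminant side have to match exactly the denominator $[M_n:K_0]\,[M_n:K_{i_n-1}]$, so that the final bound depends only on $T$ and on the fixed base field $K_0$. This exact matching is what makes Widmer's criterion a uniform statement in $i$; the reduction to integral generators and the placewise estimates of the conjugate differences are technical but routine.
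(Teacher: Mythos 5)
The paper itself does not prove this theorem; it is imported verbatim from \cite[Theorem 3]{Widmer}. Your overall plan --- argue by contradiction, extract a sequence $(\alpha_n)$ with minimal indices $i_n\to\infty$, form $M_n=K_{i_n-1}(\alpha_n)$, bound the infimum quantity by a height--discriminant inequality, and observe that $[M_n:K_0][M_n:K_{i_n-1}]=[K_{i_n-1}:\mathbb{Q}][M_n:K_{i_n-1}]^2/[K_0:\mathbb{Q}]$ makes the factors cancel --- is structurally the same as Widmer's original argument, which is driven by Silverman's 1984 height--discriminant lower bound. So the skeleton and the exponent bookkeeping are right.

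The gap is in your reduction to integral generators. You assert that replacing $\alpha_n$ by a suitable nonzero integer multiple costs only an additive \emph{constant} in $h(\alpha_n)$. This is not so: the smallest positive rational integer $c_n$ with $c_n\alpha_n$ an algebraic integer is essentially the leading coefficient of the primitive minimal polynomial of $\alpha_n$ over $\mathbb{Z}$, and all one can say in general is $\log c_n\le[\mathbb{Q}(\alpha_n):\mathbb{Q}]\,h(\alpha_n)$. Since $[\mathbb{Q}(\alpha_n):\mathbb{Q}]$ is unbounded, the ``additive constant'' can be as large as $[M_n:\mathbb{Q}]\,T$. Tracing that through your inequality, the error term $[K_{i_n-1}:\mathbb{Q}][M_n:K_{i_n-1}]([M_n:K_{i_n-1}]-1)\cdot\log c_n$ no longer disappears after division by $[M_n:K_0][M_n:K_{i_n-1}]$: you are left with a bound growing like $[K_{i_n-1}:\mathbb{Q}]\,T$, so the uniformity in $i$ (and hence the contradiction) is lost. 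The fix is to use Silverman's inequality directly for an arbitrary (not necessarily integral) generator: it gives $h(\alpha)\ge\frac{\log N_{K/\mathbb{Q}}(D_{L/K})}{2[L:K]([L:K]-1)[K:\mathbb{Q}]}-\frac{\log[L:K]}{2([L:K]-1)}$, whose additive defect contributes only $[K_0:\mathbb{Q}]\cdot\log[L:K]/[L:K]$ after the division, a quantity bounded uniformly. With that replacement your argument goes through; without it, the integrality reduction breaks the proof.
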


\begin{Corollary}\label{cor}
Let $K_0\subseteq K_1\subseteq\dots$ be a tower of number fields
and let $d_i=[K_i:\mathbb{Q}]$.
If for each intermediate field $K_{i-1}\subsetneqq M\subseteq K_i$
there exists a prime number $p>i^{d_i^2}$
that is unramified in $K_{i-1}$ but ramified in $M$,
then $K:=\bigcup_{i=0}^\infty K_i$ has (N).
\end{Corollary}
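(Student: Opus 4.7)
The plan is to derive the Corollary directly from Theorem \ref{thm:Widmer} by bounding the infimum from below by $i$ and then letting $i\to\infty$. Fix $i$ and let $M$ be any intermediate field with $K_{i-1}\subsetneqq M\subseteq K_i$. By hypothesis, there is a prime number $p=p_M>i^{d_i^2}$ that is unramified in $K_{i-1}$ but ramified in $M$.

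Next I would translate ramification of $p$ in $M$ into divisibility of the relative discriminant. Since $p$ is unramified in $K_{i-1}$ but ramified in $M$, there must exist a prime $\mathfrak{p}$ of $K_{i-1}$ above $p$ that ramifies in $M$; hence $\mathfrak{p}\mid D_{M/K_{i-1}}$. Taking norms down to $\mathbb{Q}$ gives
$$
 N_{K_{i-1}/\mathbb{Q}}(D_{M/K_{i-1}})\geq N_{K_{i-1}/\mathbb{Q}}(\mathfrak{p})=p^{f(\mathfrak{p}/p)}\geq p>i^{d_i^2}.
$$

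The remaining step is to control the exponent $([M:K_0][M:K_{i-1}])^{-1}$ in the statement of Theorem \ref{thm:Widmer} uniformly in $M$. Since $\mathbb{Q}\subseteq K_0\subseteq M\subseteq K_i$, both $[M:K_0]$ and $[M:K_{i-1}]$ are at most $[K_i:\mathbb{Q}]=d_i$, so $[M:K_0][M:K_{i-1}]\leq d_i^2$. Combining this with the previous estimate yields
$$
 N_{K_{i-1}/\mathbb{Q}}(D_{M/K_{i-1}})^{([M:K_0][M:K_{i-1}])^{-1}}\geq p^{1/d_i^2}>\left(i^{d_i^2}\right)^{1/d_i^2}=i.
$$
The right-hand side is independent of $M$, so taking the infimum over $M$ still gives a lower bound of $i$, which tends to infinity as $i\to\infty$. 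Theorem \ref{thm:Widmer} then applies and gives that $K$ has (N).

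I do not expect any serious obstacle here: the argument is essentially bookkeeping, combining the elementary fact that a ramified prime divides the relative discriminant with the trivial degree bound $[M:K_0],[M:K_{i-1}]\leq d_i$. The only point that requires minor care is that the prime $p=p_M$ is allowed to depend on $M$, but since the resulting bound $p^{1/d_i^2}>i$ is uniform in $M$, this causes no difficulty when passing to the infimum.
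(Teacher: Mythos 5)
Your proposal is correct and follows essentially the same route as the paper: translate ramification of $p$ at some prime of $K_{i-1}$ into divisibility of the relative discriminant, deduce $N_{K_{i-1}/\mathbb{Q}}(D_{M/K_{i-1}})\geq p$, and use the trivial degree bound $[M:K_0][M:K_{i-1}]\leq d_i^2$ to conclude that the quantity in Theorem \ref{thm:Widmer} is at least $i$. The paper's proof is just slightly terser, leaving the degree bound implicit.
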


\begin{proof}
Let $K_{i-1}\subsetneqq M\subseteq K_i$.
If $p$ is ramified in $M$ but not in $K_{i-1}$, there is a prime $\mathfrak{p}$ of $K_{i-1}$ over $p$ that ramifies in $M$.
Then $\mfp|D_{M/K_{i-1}}$, hence $p|N:=N_{K_{i-1}/\mathbb{Q}}(D_{M/K_{i-1}})$.
Thus $N\geq p>i^{d_i^2}$, hence
$N^{([M:K_0][M:K_{i-1}])^{-1}}\geq i$, so Theorem \ref{thm:Widmer} applies.
\end{proof}

\section{Proof of Proposition \ref{prop1}}

\noindent
Let $K_0$ be any proper finite extension of $\mathbb{Q}$ in $\overline{\mathbb{Q}}$.
We fix an algebraic integer $0\neq\alpha\in K_0$ and $\sigma\in{\rm Gal}(\overline{\mathbb{Q}}/\mathbb{Q})$ such that
$\beta:=\sigma\alpha/\alpha$ is not a root of unity
(this is always possible, but take for example $K_0=\mathbb{Q}(i)$, $\alpha=2+i$ and complex conjugation as $\sigma$).
Choose a sequence of prime numbers $l_i$ with $l_i\rightarrow\infty$
and let $d_i=[K_0:\mathbb{Q}]\cdot l_1\cdots l_i$.
We now construct a tower of number field $K_0\subseteq K_1\subseteq\dots$ with $[K_i:\mathbb{Q}]=d_i$.
Suppose we already constructed $K_0,\dots,K_{i-1}$.
Fix a prime number $p_i>i^{d_i^2}$ that in addition does not ramify in $K_{i-1}$
and does not divide $N_{K_0/\mathbb{Q}}(\alpha)$,
let $\gamma_i$ be an $l_i$-th root of $p_i\alpha$ in $\overline{\mathbb{Q}}$
and define $K_i=K_{i-1}(\gamma_i)$.

We claim that $K:=\bigcup_{i} K_i$ has the desired properties:
For each $i$, $p_i$ ramifies in $K_i$ but not in $K_{i-1}$, and there are no other intermediate fields $K_{i-1}\subsetneqq M\subseteq K_i$.
Therefore, Corollary \ref{cor} applies and gives that $K$ has (N).
However, if $\hat{K}$ denotes the Galois closure of $K$ over $\mathbb{Q}$, then for each $i$,
$\hat{K}$ contains both $\gamma_i$ and $\sigma\gamma_i$
and therefore also 
$\frac{\sigma\gamma_i}{\gamma_i}$,
which satisfies
$$
 \left(\frac{\sigma\gamma_i}{\gamma_i}\right)^{l_i}=\frac{\sigma(p_i\alpha)}{p_i\alpha}=\frac{\sigma\alpha}{\alpha}=\beta.
$$
So since $\beta$ is not a root of unity, 
neither is $\frac{\sigma\gamma_i}{\gamma_i}$,
and $h(\frac{\sigma\gamma_i}{\gamma_i})=\frac{1}{l_i}h(\beta)\rightarrow 0$, hence $\hat{K}$ does not satisfy (B).

\section{Proof of Proposition \ref{prop2}}

\noindent
We want to construct a certain field $K\subseteq\overline{\mathbb{Q}}$ and prove that it is pseudo-algebraically closed.
It is well-known that for this it suffices to show that every geometrically irreducible curve $X$ over $\mathbb{Q}$ has a $K$-rational point,
see \cite[Theorem 11.2.3]{FJ}.
Moreover, since every curve admits a finite cover which is itself a Galois cover of $\mathbb{P}^1$ (see \cite[Theorem 18.9.3]{FJ}),
it suffices to prove the statement for the latter curves.
Therefore, let $X_1,X_2,\dots$ be an enumeration of the geometrically irreducible curves over $\mathbb{Q}$
that admit a Galois morphism to $\mathbb{P}^1$.

For each $i$ we will construct a suitable finite Galois extension $N_i$ of $\mathbb{Q}$ of degree $n_i$ such that $X_i(N_i)\neq\emptyset$,
let $K_i$ be the compositum of $N_1,\dots,N_i$ 
(which has then degree at most $d_i:=n_1\cdots n_i$ over $\mathbb{Q}$)
and $K$ the union of the $K_i$ (i.e.~the compositum of all $N_i$).
Suppose we already constructed $N_1,\dots,N_{i-1}$ and denote by $K_{i-1}$ their compositum.
Fix a Galois morphism $\varphi_i:X_i\rightarrow\mathbb{P}^1$,
which induces a Galois extension of function fields $F:=\mathbb{Q}(\mathbb{P}^1)\subseteq \mathbb{Q}(X_i)=:E$.

We now apply a version of Hilbert's irreducibility theorem that allows some control on the ramification.
While there are several such results in the literature, we intend to use \cite[Corollary 3.3]{Legrand}.
For this,
list the 
intermediate fields\footnote{In fact, it would suffice to work with the {\em minimal} such fields.} $F\subsetneqq M\subseteq E$
that are Galois over $F$ as $M_1,\dots,M_{r}$
and observe that each $M_j/F$ ramifies in some branch point $\alpha_j\in\mathbb{A}^1(\overline{\mathbb{Q}})=\overline{\mathbb{Q}}$
by the Riemann-Hurwitz formula.
In particular, there is a corresponding inertia subgroup $I\subseteq{\rm Gal}(E/F)$ not contained in ${\rm Gal}(E/M_j)$.
Pick $g\in I\setminus{\rm Gal}(E/M_j)$ and let $C_j:=g^{{\rm Gal}(E/F)}$ be the conjugacy class of $g$.
If $m_j\in\mathbb{Q}[X]$ denotes the minimal polynomial of $\alpha_j$ over $\mathbb{Q}$, 
by the Chebotarev density theorem there are infinitely many prime numbers $p$ such that $m_j\in\mathbb{Z}_{(p)}[X]$ and $m_j$ has a zero modulo $p$.
We can therefore choose primes $p_1,\dots,p_{r}$ that are
\begin{enumerate}
\item pairwise distinct,
\item\label{it} greater than $i^{d_i^2}$,
\item not among the finitely many {\em bad primes} of the cover $\varphi_i$ (cf.~\cite[Def.~2.6]{Legrand}),
\item not among the finitely many prime numbers that ramify in $K_{i-1}$,
\item and such that $m_j$ has a zero modulo $p_j$ for $j=1,\dots,r$.
\end{enumerate} 
Now \cite[Corollary 3.3]{Legrand} gives $x\in\mathbb{P}^1(\mathbb{Q})$ such that 
the fiber $\varphi_i^{-1}(x)$ is irreducible with function
field a Galois extension $N_i$ of $\mathbb{Q}$ with ${\rm Gal}(N_i/\mathbb{Q})\cong{\rm Gal}(E/F)$
and such that the inertia group at each $p_j$ is generated by an element of $C_j$.
In particular, in each (not necessarily Galois) subextension $\mathbb{Q}\subsetneqq M\subseteq N_i$,
one of the $p_1,\dots,p_r$ ramifies.

Let $K=\bigcup_i K_i$.
By construction, $X_i(K)\supseteq X_i(N_i)\neq\emptyset$, so $K$ is pseudo-algebraically closed.
Moreover, $K$ satisfies (N) as the conditions of Corollary \ref{cor} are met:
Each $K_{i-1}\subsetneqq M\subseteq K_i=K_{i-1}N_i$ is of the form $M=K_{i-1}M_0$ for some
$\mathbb{Q}\subsetneqq M_0\subseteq N_i$,
and by construction there is a prime $p>i^{d_i^2}$ that ramifies in $M_0$ (and therefore in $M$) but not in $K_{i-1}$.

\begin{Remark}
Lukas Pottmeyer pointed out to me that
replacing $i^{d_i^2}$ in (\ref{it}) by $(i+1)^{4d_i^2}$ will achieve that 
$K_{\log(2)}=\{0,\pm1\}$, i.e.~$\alpha=2$ is of smallest positive
height in $K$.
\end{Remark}

\section{Proof of Proposition \ref{prop3}}

\noindent
For a prime number $p$ we denote by $\mathbb{Q}^{{\rm t}p}$ the field of {\em totally $p$-adic numbers},
i.e.~the maximal Galois extension of $\mathbb{Q}$ in which $p$ is totally split.
%For a set of primes $S$ we let $\mathbb{Q}^{{\rm t}S}=\bigcap_{p\in S}\mathbb{Q}^{{\rm t}p}$.
We first recall a result of Bombieri and Zannier \cite[Example 2]{BZ}.
They prove that for any finite set of prime numbers $p_1,\dots,p_n$, the intersection $L:=\bigcap_{i=1}^n\mathbb{Q}^{{\rm t}p_i}$ does not have (N).
More precisely, they show that
\begin{eqnarray}\label{eqn:BZ}
 \liminf_{\alpha\in L} h(\alpha) \leq \sum_{i=1}^n\frac{\log p_i}{p_i-1}.
\end{eqnarray}
To start our construction, fix any $T>0$ and choose a sequence $d_1,d_2,\dots$ 
such that $d_i>e$ for each $i$ and $\sum_{i=1}^\infty\frac{\log d_i}{d_i-1}<T$.
We want to construct an infinite sequence of primes $p_1,p_2,\dots$ and pairwise distinct elements $x_1,x_2,\dots\in\bigcap_{i=1}^\infty\mathbb{Q}^{{\rm t}p_i}$
with $p_i>d_i$ and $h(x_i)<T$ for each $i$.
Suppose we already constructed primes $p_1,\dots,p_{n-1}$ and $x_1,\dots,x_{n-1}\in\bigcap_{i=1}^{n-1}\mathbb{Q}^{{\rm t}p_i}$ 
with $p_i>d_i$ and $h(x_i)<T$ for $i=1,\dots,n-1$.
By the Chebotarev density theorem, there are infinitely many primes $p$ such that $p$ is totally split in 
the Galois closure of $\mathbb{Q}(x_1,\dots,x_{n-1})$,
in other words, $x_1,\dots,x_{n-1}\in\mathbb{Q}^{{\rm t}p}$.
Choose such a prime $p_n>d_n$ and note that $x_1,\dots,x_{n-1}\in\bigcap_{i=1}^{n}\mathbb{Q}^{{\rm t}p_i}$.
Now by (\ref{eqn:BZ}), 
there exists $x_n\in\bigcap_{i=1}^n\mathbb{Q}^{{\rm t}p_i}\setminus\{x_1,\dots,x_{n-1}\}$ with 
$$
 h(x_n)\leq\sum_{i=1}^n\frac{\log p_i}{p_i-1}\leq\sum_{i=1}^n\frac{\log d_i}{d_i-1}<T.
$$
Continuing this construction, we arrive at $K:=\bigcap_{i=1}^\infty\mathbb{Q}^{{\rm t}p_i}$
with $K_T\supseteq\{x_1,x_2,\dots\}$ infinite, so $K$ does not satisfy (N).
As $d_i\rightarrow\infty$, the set $\{p_1,p_2,\dots\}$ is infinite.

\section*{Acknowledgements}

\noindent
The author would like to thank Martin Widmer for drawing his attention to the circle of questions discussed here,
Lior Bary-Soroker for helpful discussion on Proposition \ref{prop2},
and Lukas Pottmeyer for very interesting exchange on the subject and helpful comments on a previous version.
These discussions and the writing of this note took place during the conference {\em Specialization problems in diophantine geometry} in July 2017 in Cetraro, and the author is grateful to Umberto Zannier and the organizers for the opportunity to attend this exciting and inspiring meeting.

\end{document}